\newtheorem{thm}{Theorem}[section]
\newtheorem{lemma}[thm]{Lemma}
\theoremstyle{definition}
\newtheorem*{thmA}{Theorem A}
\newtheorem*{thmB}{Theorem B}
\newtheorem*{thmC}{Theorem C}
\theoremstyle{definition}
\numberwithin{equation}{section}
\newcommand{\N}{\mathbb N}
\begin{document}

\author[G.A.\ Fern\'andez-Alcober]{Gustavo A. Fern\'andez-Alcober}
\address{Department of Mathematics, University of the Basque Country UPV/EHU,
48080 Bilbao, Spain}
\email{gustavo.fernandez@ehu.es}

\author[M.\ Morigi]{Marta Morigi}
\address{Dipartimento di Matematica, Universit\`a di Bologna,
Piazza di Porta San Donato 5, 40126 Bologna, Italy}
\email{marta.morigi@unibo.it}

\author[P.\ Shumyatsky]{Pavel Shumyatsky}
\address{Department of Mathematics, University of Brasilia,
Brasilia-DF, 70910-900 Brazil}
\email{pavel@unb.br}

\title[Coverings of commutators]{Procyclic coverings of commutators in profinite groups}

\subjclass[2010]{20E18; 20F14.}
\keywords{Profinite groups; procyclic subgroups; commutators}
\thanks{The first and second authors are supported by the Spanish Government, grant
MTM2011-28229-C02-02.
The first and third authors are supported by the Brazilian and Spanish Governments, under the project with the following references: Capes/DGU 304/13; PHB2012-0217-PC.
The first author is also supported by the Basque Government, grant IT753-13.
The second author is also supported by INDAM (GNSAGA)}

\begin{abstract}  We consider profinite groups in which all commutators are contained in a union of finitely many procyclic subgroups. It is shown that if $G$ is a profinite group in which all commutators are covered by $m$ procyclic subgroups, then $G$ possesses a finite characteristic subgroup $M$ contained in $G'$ such that the order of $M$ is $m$-bounded and $G'/M$ is procyclic. If $G$ is a pro-$p$ group such that all commutators in $G$ are covered by $m$ procyclic subgroups, then $G'$ is either finite of $m$-bounded order or procyclic.

\end{abstract}

\maketitle

\section{Introduction}
A covering of a group $G$ is a family $\{S_i\}_{i\in I}$ of subsets of
$G$ such that $G=\bigcup_{i\in I}\,S_i$.
If $\{H_i\}_{i\in I}$ is a covering of $G$ by subgroups, it is natural to ask
what information about $G$ can be deduced from properties of the subgroups $H_i$. In the case where the covering is finite actually quite a lot about the structure of $G$ can be said. In particular, as was first pointed out by Baer (see \cite[p.\ 105]{Rob}), a group covered by finitely many cyclic subgroups is either cyclic or finite. Fern\'andez-Alcober and Shumyatsky proved that if $G$ is a group in which the set of all commutators is covered by finitely many cyclic subgroups, then the derived group $G'$ is either finite or cyclic \cite{FerShu}. Later, in \cite{CN}, Cutolo and Nicotera showed that if $G$ is a group in which the set of all $\gamma_{j}$-commutators is covered by finitely many cyclic subgroups, then $\gamma_{j}(G)$ is finite-by-cyclic. They also showed that $\gamma_{j}(G)$ can be infinite and not cyclic. It is still unknown whether  a similar result holds for the derived words $\delta_{j}$.

Recall that a profinite group is a topological group that is isomorphic
to an inverse limit of finite groups.  The textbooks \cite{ribes-zal}  and \cite{book:wilson} provide a good introduction to the theory of profinite groups. In the context of profinite groups all the usual concepts of group theory are interpreted topologically. In particular, the derived group $G'$ of a profinite group $G$ is the {\it closed} subgroup generated by all commutators in $G$.

In this paper we examine profinite groups in which all commutators are covered by finitely many procyclic subgroups. Our natural expectation was 
that the derived subgroup in such a group should either be finite or procyclic, but this turned out to be false. Indeed, let $A$ be a finite group such that $A'$ is noncyclic of order four, and let $B$ be a pro-$p$ group such that $B'$ is infinite procyclic, where $p$ is an odd prime. Then it is easy to see that $G=A\times B$ is a profinite group in which $G'$ is infinite, not procyclic, and can be covered by 3 procyclic subgroups.

However, we can prove the following result.

\begin{thmA} Let $m$ be a positive integer and let $G$ a profinite group in which all commutators are covered by $m$ procyclic subgroups. Then $G$ possesses a finite characteristic subgroup $M$ contained in $G'$ such that the order of $M$ is $m$-bounded and $G'/M$ is procyclic.
\end{thmA}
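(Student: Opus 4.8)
The plan is to deduce Theorem A from its finite analogue (the Fernández–Alcober–Shumyatsky theorem from \cite{FerShu}) by an inverse-limit argument, with a uniform bound coming from a compactness/Baire-category input. First I would invoke the Baire category theorem: since $G$ is a profinite (hence compact Hausdorff) group and the set of commutators is closed (the continuous image of the compact set $G\times G$ under the commutator map), the fact that the commutators lie in a union of $m$ procyclic subgroups $C_1,\dots,C_m$ forces some finite union of \emph{cosets} of the $C_i$ to contain an open subset, and from there a standard argument shows one may assume each $C_i$ is closed and that in fact finitely many left translates of the $C_i$ cover $G$ itself; so $|G:\bigcup C_i|$-type information is available. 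The key consequence I want is that the set of commutators, being closed and covered by $m$ procyclic subgroups, is covered by $m$ \emph{closed} procyclic subgroups.

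Next I would pass to finite quotients. For every open normal subgroup $N\trianglelefteq_o G$, the finite group $G/N$ has all its commutators covered by the images of $C_1,\dots,C_m$, which are $m$ cyclic subgroups. By the finite case \cite{FerShu}, $(G/N)'=(G'N)/N$ is finite-or-cyclic; more precisely, re-examining that proof, one extracts that $G/N$ has a characteristic subgroup $M_N/N \le (G/N)'$ with $|M_N/N|$ bounded by a function of $m$ alone and $(G/N)'/(M_N/N)$ cyclic. The crucial point — and the main obstacle — is uniformity: I must know that the bound $f(m)$ on $|M_N/N|$ does not depend on $N$. This should follow because the finite theorem gives an $m$-bound, not just finiteness; one has to check the statement in \cite{FerShu} is quantitative (or redo its argument keeping track of constants). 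Granting this, set $M = \varprojlim M_N/N$ viewed inside $G'$; equivalently $M$ is the intersection of the preimages in $G'$ of the torsion-type obstructions, and $|M| \le f(m)$ since a profinite group all of whose finite quotients have order $\le f(m)$ has order $\le f(m)$.

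Then I would verify the two required properties of $M$. That $M$ is characteristic and contained in $G'$ is automatic from the construction (each $M_N/N$ is characteristic in $G/N$, and characteristicity is inherited by inverse limits). That $G'/M$ is procyclic follows because $(G'/M)/(UM/M)$ is cyclic for a cofinal family of open $U$ — indeed $G'/M$ is an inverse limit of cyclic groups, and an inverse limit of finite cyclic groups is procyclic. A small amount of care is needed to align the various $M_N$ into a genuine coherent inverse system: I would define $M$ directly as the intersection over all $N$ of the subgroups $M_N$, show this intersection still has order $\le f(m)$ (a chain of subgroups of bounded order stabilizes), show $G'/M$ injects into the inverse limit of the $(G/N)'/(M_N/N)$, and conclude procyclicity. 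The delicate step throughout remains extracting the explicit $m$-bound from the finite case and checking it is insensitive to the choice of finite quotient; everything else is routine profinite bookkeeping.
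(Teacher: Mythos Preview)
Your overall architecture---reduce to finite quotients, invoke a uniform $m$-bound there, and reassemble via an inverse limit---is exactly what the paper does. But there are two genuine gaps.

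First, the quantitative finite statement you need is \emph{not} in \cite{FerShu}. That paper proves only that $G'$ is finite or cyclic when commutators lie in finitely many cyclic subgroups; it does not produce a characteristic subgroup of $m$-bounded order with cyclic quotient, and this cannot be read off by ``keeping track of constants.'' Proving that strengthened finite result (Theorem~\ref{main} here) is the main technical content of the paper: it occupies all of Section~2 and requires a separate treatment of the class-$2$ and metabelian cases, Guralnick's theorem on commutators when $G'$ is a $2$-generator abelian $p$-group, the coprime-commutator theorem of \cite{AS1} to control $\gamma_\infty(G)$, and a bound on derived length. Your opening Baire-category paragraph is also beside the point: procyclic subgroups of a profinite group are closed by definition, and their images in each finite quotient are automatically cyclic, so no further topological input is needed.

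Second, your inverse-limit bookkeeping does not work as written. Choosing one characteristic $M_N/N$ per quotient gives no reason for these choices to be compatible under the transition maps $G/L\to G/N$, so neither ``$\varprojlim M_N/N$'' nor ``$\bigcap_N M_N$'' is meaningful as stated; and your stabilization argument (``a chain of subgroups of bounded order stabilizes'') fails because the $M_N$ need not form a chain. The paper handles this by relaxing ``characteristic'' to ``normal'': for each $N$ it considers the finite \emph{set} $\mathcal{M}(N)$ of all normal $R\le (G/N)'$ with $|R|\le f(m)$ and $(G/N)'/R$ cyclic, forms the inverse system of these nonempty finite sets, and extracts a coherent element $(M_N/N)_N$ from the (nonempty) inverse limit. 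The resulting closed normal $M\le G'$ then has $|M|\le f(m)$ and $G'/M$ procyclic; a short separate lemma (Lemma~\ref{normal-characteristic}) upgrades $M$ to a characteristic subgroup of order at most $|M|^2$.
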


As usual, we use the expression ``$a$-bounded" to mean ``bounded from above by some function depending only on the parameter $a$".

Further, we concentrate on pro-$p$ groups in which all commutators are covered by finitely many procyclic subgroups. In this case our initial expectation that $G'$ is either finite or procyclic has been confirmed.

\begin{thmB} Let $p$ be a prime and let $G$ be a pro-$p$ group such that all commutators in $G$ are covered by $m$ procyclic subgroups. Then $G'$ is either finite of $m$-bounded order or procyclic.
\end{thmB}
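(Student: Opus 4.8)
The plan is to deduce Theorem~B from the bounded form of the finite case by an inverse-limit argument, rather than through Theorem~A. The finite input I would use is that there is a function $f$ such that every finite $p$-group whose set of commutators is covered by $m$ cyclic subgroups has $G'$ cyclic or $|G'|\le f(m)$. I expect establishing this to be the main obstacle: it should come out of the proof of the main theorem of \cite{FerShu} by tracking the bounds, but the restriction to $p$-groups is essential — for general finite groups it is false, as the finite quotients of the example $A\times B$ of the Introduction show (there $C_2\times C_{p^j}$ is cyclic, so the covering persists while $G'$ stays noncyclic of unbounded order), and this is precisely why Theorem~B requires $G$ to be pro-$p$.

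Granting the finite input, let $G$ be a pro-$p$ group with commutators covered by procyclic subgroups $C_1,\dots,C_m$. For an open normal subgroup $N\trianglelefteq G$, the finite $p$-group $G/N$ has all of its commutators inside the cyclic subgroups $C_1N/N,\dots,C_mN/N$, since every commutator of $G/N$ is the image of a commutator of $G$; hence $(G/N)'=G'N/N\cong G'/(G'\cap N)$ is cyclic or of order at most $f(m)$. Moreover, if $G'/(G'\cap N)$ is non-cyclic and $N'\le N$, then $G'/(G'\cap N')$ maps onto it and is also non-cyclic. Since $G'$ is closed, $G'=\varprojlim_N G'/(G'\cap N)$, the limit being over the directed set of open normal subgroups of $G$.

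Now distinguish two cases. If the open normal $N$ with $G'/(G'\cap N)$ cyclic are cofinal, then they also form a downward-directed family (given two of them, their intersection is open normal, and by cofinality some member of the family lies below it), so $G'$ is the inverse limit over this family of finite cyclic groups and is therefore procyclic. Otherwise some open normal $N_0$ has $G'/(G'\cap N)$ non-cyclic — hence of order at most $f(m)$ — for every open normal $N\le N_0$; as these $N$ are cofinal, $G'$ is an inverse limit of finite groups of order at most $f(m)$, and such an inverse limit is itself finite of order at most $f(m)$ (in the image system the transition maps are surjections between groups of non-increasing, uniformly bounded order, hence eventually isomorphisms). In either case Theorem~B follows. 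One could instead start from Theorem~A, which yields a finite characteristic $M\le G'$ with $G'/M$ procyclic and $|M|$ $m$-bounded; but to finish in the infinite case one then has to show that $M$ — which is easily seen to be the torsion subgroup of $G'$ — is trivial, and since in a pro-$p$ group every procyclic subgroup is pro-$p$, a nontrivial $M$ again forces finite quotients of $G'$ of the shape (finite $p$-group)$\,\times C_{p^j}$ with $j$ unbounded, non-cyclic of unbounded order, so one is driven back to exactly the same finite estimate.
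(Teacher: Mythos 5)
Your proposal is correct and follows essentially the same route as the paper: the ``finite input'' you flag as the main obstacle is precisely Theorem \ref{finite-p}, which the paper establishes beforehand (via Theorem \ref{main}, Guralnick's theorem and Lemma \ref{pri}, rather than by tracking bounds in \cite{FerShu}), so nothing remains to be proved there. Your cofinality dichotomy over the open normal subgroups is just a reorganization of the paper's argument that once a single finite quotient has derived subgroup of order exceeding the bound --- hence cyclic --- the same holds for all finer quotients, forcing $G'$ to be procyclic.
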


The above results are not the first that deal with coverings of word-values in profinite groups. For a family of group words $w$ it was shown in \cite{mz13} that if $G$ is a profinite group in which all $w$-values are contained in a union of finitely many closed subgroups with a prescribed property,
then the verbal subgroup $w(G)$ has the same property as well. More recently the results obtained in \cite{mz13} have been extended to profinite groups in which all $w$-values are contained in a union of countably many closed subgroups \cite{demori}. Quite possibly, for profinite groups in which the commutators are covered by countably many procyclic subgroups some analogues of Theorem A and Theorem B hold true.

Though profinite groups constitute the main topic of the present study, the above results also have a bearing on the case of abstract groups. As we have 
already mentioned, the main result in \cite{FerShu} says that if $G$ is an abstract group whose commutators are covered by finitely many cyclic subgroups, then $G'$ is either finite or cyclic. Now we can deduce the following additional information.

\begin{thmC} \label{abstract} Let $G$ be a group that possesses $m$ cyclic subgroups whose union contains all commutators of $G$. Then $G$ has a characteristic subgroup $M$ contained in $G'$ such that the order of $M$ is $m$-bounded and $G'/M$ is cyclic.
\end{thmC}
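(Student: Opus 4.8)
The plan is to deduce Theorem C from Theorem A by passing to the profinite completion, dispatching the case where $G'$ is cyclic separately via \cite{FerShu}. By the main result of that paper, $G'$ is either finite or cyclic; if $G'$ is cyclic we take $M=1$, so from now on assume $G'$ is finite and let us produce a characteristic subgroup $M\le G'$ of $m$-bounded order with $G'/M$ cyclic.

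Let $\hat G$ be the profinite completion of $G$ and $\iota\colon G\to\hat G$ the canonical homomorphism. Since $G'$ is finite, $\iota(G')$ is a finite, hence closed, normal subgroup of $\hat G$, and $\hat G/\iota(G')$ is abelian because it contains the dense abelian image of $G/G'$; therefore $\hat G'=\iota(G')$ is finite. Moreover the commutators of $\hat G$ are covered by the $m$ procyclic subgroups $\overline{\iota(C_i)}$: any commutator $[\iota(g),\iota(h)]=\iota([g,h])$ lies in some $\overline{\iota(C_i)}$, and since the commutator map of $\hat G$ is continuous, $\hat G\times\hat G$ is compact, and $\iota(G)$ is dense, the whole (closed) set of commutators of $\hat G$ is contained in the closed set $\bigcup_{i=1}^m\overline{\iota(C_i)}$. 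Thus Theorem A applies to $\hat G$ and yields a finite characteristic subgroup $\hat M\le\hat G'$ of $m$-bounded order with $\hat G'/\hat M$ procyclic; as $\hat G'$ is finite, this quotient is finite cyclic.

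Now pull the subgroup back: let $M$ be the full preimage of $\hat M$ in $G'$ under $\iota$. Since $\iota(G')=\hat G'$ we get $G'/M\cong\hat G'/\hat M$, which is cyclic, and $M$ is characteristic in $G$ because every $\alpha\in\Aut(G)$ extends to an automorphism of $\hat G$ that fixes $\hat M$. Writing $R$ for the finite residual of $G$, we have $|M|=|G'\cap R|\cdot|\hat M|$; the factor $|\hat M|$ is $m$-bounded by Theorem A, so everything reduces to showing that $|G'\cap R|$ is $m$-bounded.

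This last point is the technical heart of the proof and the step I expect to be the main obstacle, since it genuinely uses the covering hypothesis and is not a formal consequence of Theorem A: the kernel of $\iota$ on $G'$ can be nontrivial --- one can build central extensions in which $G'$ is finite and non-cyclic yet embeds into a divisible abelian group, so that $G'$ dies in every finite quotient of $G$. To bound $K:=G'\cap R$ I would first use the standard fact that the finite residual $R$ centralizes every finite normal subgroup of $G$: applied to $G'$, this gives $[R,G']=1$, so $K\le Z(G')$ is normal in $G$, and since $R'\le G'$ we also get $R'\le K$ and that $R$ is nilpotent of class at most $2$. It then remains to bound $|K|$ in terms of $m$ using only that all commutators of $G$ lie in $m$ cyclic subgroups; the basic tool is Baer's lemma in quantitative form --- among $m$ cyclic subgroups whose union is a subgroup, the largest one has index at most $m$ --- applied to suitable sections of $G'$, together with the structure of the finite abelian group $K$. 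Once $|K|$ is controlled, replacing the subgroup found above by the join of its finitely many images under $\Aut(G)$ (which is harmless because $G'$ is finite) produces the required characteristic subgroup of $G$.
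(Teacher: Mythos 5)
Your reduction to the case of finite $G'$ via \cite{FerShu} is fine, and the passage to the profinite completion, together with the compactness/density argument showing that the commutators of $\hat G$ are covered by the closures $\overline{\iota(C_i)}$, is correct. The proof breaks down, however, exactly at the step you flag as the ``technical heart'': the quantity $|G'\cap R|$ (with $R$ the finite residual of $G$) is \emph{not} $m$-bounded, so the full preimage $M=\iota^{-1}(\hat M)\cap G'$ is the wrong subgroup, and no refinement of Baer's lemma will rescue it. Concretely, let $n=p^k$ with $p$ odd, let $H_n$ be the Heisenberg group over $\Z/n\Z$ (so $H_n'=Z(H_n)\cong\Z/n\Z$), and let $Q$ be the central product of $H_n$ with the Pr\"ufer group $\Z(p^{\infty})$, amalgamating $H_n'$ with the unique subgroup of order $n$ of $\Z(p^{\infty})$. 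Any finite quotient of $Q$ kills the divisible central factor and hence kills $Q'\cong\Z/n\Z$, so $Q'\le R(Q)$. Now set $G=Q\times A$ with $A=D_4\times D_4$, so that $A'\cong V_4$ and every element of $A'$ is a commutator. All commutators of $G$ lie in $Q'\times A'\cong\Z/n\Z\times V_4$, which is the union of the three cyclic subgroups $\Z/n\Z\times\langle v\rangle$ for $1\ne v\in V_4$; thus $m=3$ and $G'$ is finite and noncyclic, yet $G'\cap R(G)=Q'\times 1$ has order $n$, which is unbounded. Your $M$ would contain $Q'\times 1$ and therefore would not have $3$-bounded order; the correct choice here is $M=1\times V_4$, which is disjoint from the part of $G'$ that is invisible in $\hat G$. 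So the issue is not merely a gap to be filled: the subgroup your construction produces fails the conclusion of the theorem.

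The paper sidesteps all of this by never forming the profinite completion of $G$ itself. Since $G'$ is finite, it is generated by finitely many commutators, so $G'=H'$ for some finitely generated subgroup $H\le G$, and a characteristic subgroup of $G'=H'$ is automatically characteristic in $G$. For such an $H$, the centralizer $C_H(H')$ has finite index, is finitely generated, and is nilpotent of class at most $2$ (as $H'\le Z(C_H(H'))$), hence residually finite by Hirsch \cite{H}; therefore $H$ is residually finite and $G'$ embeds into a finite quotient $H/N$, to which Theorem~\ref{main} applies directly. The moral is that the obstruction you identified lives entirely outside the finitely generated subgroup that actually carries $G'$, and the right move is to discard it rather than to bound it.
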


Of course, the information provided by Theorem C is meaningful only in the case where $G'$ is finite, since otherwise $G'$ is cyclic.

\section{Finite groups with commutators covered by few cyclic subgroups}
We start with some elementary lemmas. 

\begin{lemma}
\label{reduction}
Let $n\geq1$ be a positive integer and let $H$ be a characteristic finite nilpotent subgroup of a group $G$. Assume that for every prime $p$ dividing the order of $H$ the Sylow $p$-subgroup $P$ of $H$  has a characteristic subgroup $M_p$, of order at most $n$, such that $P/M_p$ is cyclic. Then $G$ possesses a characteristic subgroup $M$ contained in $H$ such that the order of $M$ is $n$-bounded and $H/M$ is cyclic.
\end{lemma}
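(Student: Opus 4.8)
The plan is to exploit the canonical decomposition of a finite nilpotent group as the direct product of its Sylow subgroups, and then simply to take for $M$ the product of the given subgroups $M_p$; the only delicate point will be controlling the order of this product.

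First I would recall that, $H$ being finite and nilpotent, it is the internal direct product $H=\prod_p P_p$ of its Sylow subgroups, the product running over the primes $p$ dividing $|H|$, and that each $P_p$, being the unique Sylow $p$-subgroup of $H$, is characteristic in $H$. Since $H$ is characteristic in $G$, each $P_p$ is then characteristic in $G$, and hence so is each $M_p$, being characteristic in $P_p$.

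Next I would set $M=\prod_p M_p$, again over the primes dividing $|H|$. As a product of characteristic subgroups of $G$ sitting inside the distinct direct factors $P_p$, this $M$ is characteristic in $G$ and is plainly contained in $H$. Furthermore $H/M\cong\prod_p P_p/M_p$ is a direct product of finite cyclic groups whose orders are powers of distinct primes and therefore pairwise coprime; hence $H/M$ is cyclic.

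Finally I would bound $|M|=\prod_p|M_p|$, which is the one step that is not purely formal, since a priori the number of primes dividing $|H|$ is unbounded in terms of $n$. The observation that makes it work is that whenever $p>n$ the subgroup $M_p$ is a $p$-group of order at most $n<p$, hence trivial; so only the primes $p\le n$ actually occur in the product. Since there are fewer than $n$ such primes and each satisfies $|M_p|\le n$, we obtain $|M|\le n^{n}$, which is $n$-bounded, and the proof is complete. I expect this last remark to be the only real content of the argument — the rest is bookkeeping with the direct-product decomposition.
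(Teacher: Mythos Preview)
Your argument is correct and is essentially identical to the paper's own proof: both set $M=\prod_p M_p$, observe that $M_p=1$ whenever $p>n$ so that $|M|\le n^n$, and conclude that $H/M$ is cyclic as a direct product of cyclic groups of coprime orders. You have simply spelled out in more detail why each $M_p$ is characteristic in $G$ and why the quotient is cyclic, but the substance is the same.
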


\begin{proof}
We take $M$ to be the product of all $M_p$, where $p$ ranges through the set of all prime divisors of the order of $H$. Obviously, $M_p=1$ whenever $p\geq n+1$ and therefore the order of $M$ is less than $n^n$. It is clear that, being the product of characteristic subgroups, $M$ is characteristic. The quotient $H/M$ is a nilpotent group with cyclic Sylow subgroups and therefore $H/M$ is cyclic. The proof is complete.
\end{proof}

\begin{lemma}
\label{normal-characteristic}
Let $H$ be a characteristic subgroup of an abstract (resp. profinite) group $G$. Suppose that $H$ possesses a normal finite subgroup $N$ such that $H/N$ is cyclic (resp. procyclic). Then $G$ has a  characteristic subgroup $M$ contained in $H$ such that the order of $M$ is at  most $|N|^{2}$ and $H/M$ is cyclic (resp. procyclic).
\end{lemma}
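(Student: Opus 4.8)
The plan is to trade the merely normal subgroup $N$ for a genuinely characteristic one, enlarging it only in a controlled way. Since $H/N$ is cyclic (resp.\ procyclic), it is abelian, so the (closed) derived subgroup $H'$ of $H$ lies in $N$; hence $H'$ is finite with $|H'|\le|N|$, and it is characteristic in $G$ because $H$ is. I would then pass to the abelian (resp.\ abelian profinite) group $A:=H/H'$ and put $\bar N:=N/H'$, so that $A/\bar N\cong H/N$ is cyclic (resp.\ procyclic) and $|\bar N|=|N|/|H'|$.

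Working inside $A$, let $e$ be the exponent of the finite group $\bar N$, so $e\le|\bar N|$, and set $\bar M:=A[e]=\{a\in A:a^e=1\}$, a (closed) characteristic subgroup of $A$ that contains $\bar N$. Since $\bar M\supseteq\bar N$, the quotient $A/\bar M$ is a quotient of $A/\bar N$, hence cyclic (resp.\ procyclic). To bound $|\bar M|$, note that $\bar M/\bar N$ is a subgroup of $(A/\bar N)[e]$: in the abstract case a cyclic group has at most $e$ elements whose order divides $e$, so $|\bar M/\bar N|\le e$; in the profinite case $\bar M/\bar N$ is a closed subgroup of the procyclic group $A/\bar N$, hence is itself procyclic, and a procyclic group of exponent dividing $e$ is finite of order dividing $e$ (if it is topologically generated by $x$ then $x$ has order dividing $e$, and the finite cyclic group $\langle x\rangle$ is then closed and dense, hence everything). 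Either way $|\bar M|=|\bar M/\bar N|\cdot|\bar N|\le e\,|\bar N|\le|\bar N|^2$.

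Finally I would take $M$ to be the preimage of $\bar M$ in $H$. Then $M\le H$; the subgroup $M$ is (closed and) characteristic in $H$, being the preimage of a characteristic subgroup of $H/H'$, and hence characteristic in $G$ by transitivity of the relation ``characteristic''; $H/M\cong A/\bar M$ is cyclic (resp.\ procyclic); and $|M|=|\bar M|\cdot|H'|\le|\bar N|^2\,|H'|=|N|^2/|H'|\le|N|^2$, as required. The abstract case is then essentially routine; the only point that needs care is the profinite bookkeeping that makes $A[e]$ finite, which is exactly where one uses that closed subgroups of procyclic groups are procyclic and that a procyclic group of finite exponent is finite with order bounded by its exponent.
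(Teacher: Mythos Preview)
Your proof is correct. The paper's argument is essentially the same idea but more direct: it takes $M$ to be the (closed) subgroup of $H$ generated by all elements of order dividing $|N|$, without first passing to the abelianization $H/H'$. This $M$ is visibly characteristic in $G$ and contains $N$; then $M/N$, being a (pro)cyclic group generated by elements of order dividing $|N|$, has order dividing $|N|$, giving $|M|\le|N|^2$ in one step. Your detour through $H'$ is harmless and even yields the marginally sharper bound $|N|^2/|H'|$, but it is not needed: the key observation---that in a (pro)cyclic group the elements of order dividing a fixed integer $n$ form a subgroup of order at most $n$---works just as well when applied directly to $M/N$ inside $H/N$, and you already did the careful profinite bookkeeping for exactly this point.
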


\begin{proof}
Take $M$ to be the subgroup generated by all elements of $H$ of order dividing $|N|$. It is clear that $M$ is a characteristic subgroup in $G$ containing $N$. Since the quotient $M/N$ is cyclic  (resp. procyclic) and generated by elements of order dividing $|N|$, it has order dividing $|N|$. It follows that the order of $M$ is at most $|N|^2$, as required.
\end{proof}

The following lemma is taken from \cite{AS1}. It will play an important role in our arguments.
\begin{lemma}
\label{pri}
Let $G$ be a finite noncyclic $p$-group that can be covered by $m$ cyclic subgroups. Then the order of $G$ is $m$-bounded.
\end{lemma}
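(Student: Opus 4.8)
The plan is to prove that $\exp(G)$ is $m$-bounded; the statement then follows at once, because $G$ is a union of $m$ cyclic subgroups, each of order at most $\exp(G)$, so $|G|\le m\exp(G)$. As a first step I would bound the prime $p$. Passing to the Frattini quotient $\overline G=G/\Phi(G)\cong(\mathbb{Z}/p)^{d}$, where $d=d(G)\ge 2$ since $G$ is noncyclic, the images of the $m$ covering subgroups cover $\overline G$ and each is a subgroup of order at most $p$; as $\overline G$ has $(p^{d}-1)/(p-1)\ge p^{d-1}$ subgroups of order $p$, all of which must appear among these images, we get $p^{d-1}\le m$, so $p\le m$ (and, in passing, $d\le 1+\log_2 m$).

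Next I would bound $\exp(G)$ by induction on it. If $\exp(G)=p$, then $G$ is covered by $m$ subgroups of order at most $p\le m$, so $|G|\le m^{2}$. Assume now $\exp(G)=p^{e}$ with $e\ge 2$. The goal is to find a subgroup $H\le G$ of $m$-bounded index with $\exp(H)<p^{e}$: if such an $H$ is noncyclic, then it is covered by the $m$ cyclic subgroups $H\cap C_i$, the inductive hypothesis applies to it, and $\exp(G)\le|G|=|G:H|\,|H|$ is $m$-bounded. When $G$ is regular this works with $H=\Omega_{e-1}(G)$: here $\exp(H)=p^{e-1}$, the index $|G:H|$ equals $|G^{p^{e-1}}|$, and $G^{p^{e-1}}$ has exponent $p$, so it meets each $C_i$ in a subgroup of order at most $p$ and therefore $|G^{p^{e-1}}|\le mp$. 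If moreover this $H$ is cyclic, then $\Omega_1(G)$ is cyclic, so $G$ has a unique subgroup of order $p$, hence $G$ is cyclic or a generalized quaternion $2$-group; the latter, of order $2^{n}$ say, is covered by $2^{n-2}+1$ cyclic subgroups, which bounds $n$.

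The hard part — and the step I expect to be the main obstacle — is the general, possibly irregular, case. None of $\Phi(G)$, a maximal subgroup, or $\Omega_{e-1}(G)$ need have exponent strictly smaller than $\exp(G)$, and the number of cyclic subgroups required to cover a subgroup of $G$ need not decrease, so one cannot simply induct on $|G|$. I would deal with this by first isolating the groups having a cyclic subgroup of index $p$ — these are completely classified ($\mathbb{Z}/p^{e}\times\mathbb{Z}/p$ for $p$ odd; dihedral, semidihedral, quaternion or modular $2$-groups for $p=2$), and for each of them one verifies directly that being covered by $m$ cyclic subgroups forces $e$ to be $m$-bounded — and then handling the remaining groups by producing, inside a suitable characteristic subgroup, a section of smaller exponent and bounded index to which the induction applies. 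Once $\exp(G)$ is bounded, $|G|\le m\exp(G)$ is bounded as well, which completes the proof.
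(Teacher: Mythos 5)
First, a remark on comparison: the paper does not prove Lemma \ref{pri} at all --- it imports it from \cite{AS1} --- so your proposal can only be judged on its own terms, and on those terms there is a genuine gap. The preliminary steps are fine (the bound $p\le m$ via the Frattini quotient, the reduction $|G|\le m\exp(G)$, the case $\exp(G)=p$, and the generalized quaternion case), but the inductive engine does not close. You induct on $e$, where $\exp(G)=p^e$, and in the favourable case produce $H\le G$ with $\exp(H)=p^{e-1}$ and $|G:H|\le mp$, then invoke the inductive hypothesis to say that $|H|$ is $m$-bounded and hence so is $|G|=|G:H|\,|H|$. But if $f(m)$ denotes the uniform bound you are trying to establish, the inductive step only yields $|G|\le mp\, f(m)$, which is not $\le f(m)$; unwinding the recursion gives $|G|\le (mp)^{e-1}m^2$, a bound depending on $e$ --- and $e$ is exactly the quantity not yet controlled. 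Combined with the trivial inequality $|G|\ge p^e$ this gives no information. The example $G=C_{p^e}\times C_p$ shows the danger concretely: it is regular, $\Omega_{e-1}(G)\cong C_{p^{e-1}}\times C_p$ is again noncyclic of index $p$, and your recursion runs for $e$ steps without ever meeting an obstruction. The lemma does hold for this $G$, but for a reason your argument never exploits: every maximal cyclic subgroup of $G$ must occur among the covering subgroups (its generator lies in some $C_i$, forcing $D=C_i$), and $C_{p^e}\times C_p$ has roughly $e(p-1)$ maximal cyclic subgroups, which is what forces $e$ to be $m$-bounded. Some count of this kind --- for instance, showing that the number of distinct subgroups among the $H\cap C_i$ strictly drops at each step, so that the recursion has depth at most $m$ --- is the missing ingredient.

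Second, the irregular case is not actually handled: you flag it yourself as the main obstacle and offer only a plan, and even the plan has a flaw. What the covering hands you is a cyclic subgroup of index at most $m$ (a covering subgroup of maximal order), not of index $p$, so the classification of $p$-groups with a cyclic maximal subgroup does not apply to ``the remaining groups'', and the promised ``section of smaller exponent and bounded index'' is precisely what is never constructed. (A smaller quibble: for $Q_{2^n}$ you need the lower bound that at least $2^{n-2}$ cyclic subgroups are required to cover it, not the upper bound you state; the lower bound does hold, since each cyclic subgroup of $Q_{2^n}$ contains at most two of the $2^{n-1}$ elements lying outside the cyclic maximal subgroup.)
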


Recall that in a group $G$ the subgroup $\gamma_\infty(G)$ is the intersection of all
$\gamma_i(G)$ for $i\in\N$. Clearly, a finite group $G$ is nilpotent if and only if $\gamma_\infty(G)=1$. It is an easy exercise to show that if $G$ is a finite group, then $\gamma_\infty(G)$ is generated by the commutators $[x,y]$ such that $x,y$ are elements of $G$ having mutually coprime orders. The following theorem was proved in \cite{AS1}.

\begin{thm}\label{glavk}
Let $G$ be a finite group that possesses $m$ cyclic subgroups whose union contains all commutators $[x,y]$ such that $x,y$ are elements of $G$ having mutually coprime orders. Then $\gamma_\infty(G)$ has a subgroup $\Delta$ such that
\begin{enumerate}
\item $\Delta$ is normal in $G$;
\item $|\Delta|$ is  $m$-bounded;
\item $\gamma_\infty(G)/\Delta$ is cyclic.
\end{enumerate}
\end{thm}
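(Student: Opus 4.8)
The plan is to reduce Theorem~\ref{glavk} to the structural results already available about cyclic coverings of all commutators and about $\gamma_\infty$. First I would set $R=\gamma_\infty(G)$ and recall that $R$ is generated by the commutators $[x,y]$ with $x,y$ of coprime orders, all of which lie in the union of the $m$ given cyclic subgroups $C_1,\dots,C_m$. The natural first step is to pass to a subgroup on which the covering behaves like a covering by \emph{cyclic} subgroups in the strong sense: each $C_i\cap R$ is cyclic, so $R$ itself is covered by the $m$ cyclic subgroups $C_i\cap R$, and by Lemma~\ref{pri} (applied Sylow-by-Sylow, after reducing to a $p$-group) any noncyclic $p$-section visible in this covering has $m$-bounded order. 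So the heart of the matter is to control the ``large'' part of $R$, namely to show that modulo an $m$-bounded normal subgroup $R$ becomes cyclic.

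Next I would exploit the coprimeness structure of the generating commutators. Write $|R|$ (or rather the relevant part of $R$, after we have localized) and consider its Sylow subgroups $R_p$. The key point is a Fitting-type decomposition: because $\gamma_\infty(G)$ is generated by commutators of elements of coprime order, and such commutators decompose nicely across the primes, one can arrange that for each prime $p$ the Sylow $p$-subgroup $R_p$ of $R$ is itself generated by commutators lying in the covering, hence covered by $m$ cyclic $p$-subgroups. Then Lemma~\ref{pri} gives: either $R_p$ is cyclic, or $|R_p|$ is $m$-bounded. Only finitely many primes $p$ (namely those with $|R_p|$ not forced to be cyclic and $\le$ some bound depending on $m$) can contribute a noncyclic Sylow, and each such contributes an $m$-bounded amount; collecting these bad Sylow subgroups into a subgroup $\Delta_0$ gives $|\Delta_0|$ $m$-bounded with $R/\Delta_0$ having all Sylow subgroups cyclic.

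The remaining issue is normality: $\Delta_0$ as constructed is characteristic in $R$ but we must also produce $\Delta$ normal in $G$ with $R/\Delta$ cyclic, and a section of $R$ with cyclic Sylow subgroups need not itself be cyclic ($R$ need not be nilpotent). Here I would first note $R=\gamma_\infty(G)$ has the feature that $\gamma_\infty(R)=R$, and more usefully that one can pass to the nilpotent residual and iterate, or invoke that a group all of whose Sylow subgroups are cyclic is metacyclic (Z-group) and in particular is cyclic-by-cyclic; combined with Lemma~\ref{normal-characteristic} and Lemma~\ref{reduction} this lets one absorb the non-cyclic ``twist'' into a further $m$-bounded characteristic subgroup. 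Taking $\Delta$ to be the characteristic closure in $G$ of the union of $\Delta_0$ with the relevant bounded metacyclic obstruction keeps the order $m$-bounded (a characteristic-subgroup-generated-by-boundedly-many-bounded-order-elements argument, as in Lemma~\ref{normal-characteristic}) and makes $R/\Delta$ cyclic, with $\Delta$ normal — indeed characteristic — in $G$ since everything was built from characteristic data of $R\trianglelefteq G$.

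The main obstacle I expect is precisely this last step: passing from ``$R$ modulo a bounded piece has cyclic Sylow subgroups'' to ``$R$ modulo a (possibly larger but still) bounded piece is actually cyclic,'' because $R$ is typically non-nilpotent, so unlike in Lemma~\ref{reduction} cyclic Sylow subgroups do not immediately give a cyclic group. Controlling the action of one Sylow subgroup on another — showing that a noncyclic semidirect-product obstruction again forces $m$-bounded order, presumably again via Lemma~\ref{pri} applied to suitable cyclic subgroups hitting the action — is where the real work lies, and it is plausible the authors handle it by a careful analysis of how the covering cyclic subgroups meet the various Sylow subgroups and their normalizers.
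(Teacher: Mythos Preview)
First, note that the paper does not actually prove Theorem~\ref{glavk}: it is quoted from \cite{AS1} and used as a black box. So there is no ``paper's own proof'' to compare against here; any assessment must be of your argument on its own merits.

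On those merits, there is a genuine gap at the very start. You write that since the coprime commutators lie in $C_1\cup\cdots\cup C_m$ and generate $R=\gamma_\infty(G)$, ``$R$ itself is covered by the $m$ cyclic subgroups $C_i\cap R$''. That does not follow: the hypothesis only puts the \emph{generators} of $R$ (the coprime commutators) into the $C_i$, not every element of $R$. The same slip recurs when you say each Sylow subgroup $R_p$ is ``generated by commutators lying in the covering, hence covered by $m$ cyclic $p$-subgroups'': the ``hence'' is a non sequitur. Lemma~\ref{pri} needs the whole group to be covered, not merely a generating set, so your invocation of it to bound the noncyclic $R_p$ is unjustified. (It is also unclear why the $p$-parts of coprime commutators should generate $R_p$ when $R$ is not nilpotent, but this is secondary to the covering error.)

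Because this first step fails, the rest of the outline --- collecting the bad Sylow pieces into $\Delta_0$, then handling the Z-group obstruction --- never gets off the ground. The actual proof in \cite{AS1} works rather differently: it does not try to cover $R$ or its Sylow subgroups wholesale, but instead analyses the action of $p'$-elements on Sylow $p$-subgroups and uses that the relevant commutators $[x,y]$ (with $x$ a $p$-element and $y$ a $p'$-element) themselves form a large enough set inside the covering to force structural constraints, via coprime-action machinery rather than a direct application of Lemma~\ref{pri} to $R_p$.
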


Further, we will require the following special case of a result of Guralnick
\cite[Theorem A]{Gur}.

\begin{thm}
\label{guralnick}
Let $G$ be a finite group in which $G'$ is an abelian $p$-group generated by at most two elements. Then every element of $G'$ is a commutator.
\end{thm}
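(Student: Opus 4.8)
Since this is Guralnick's theorem, what follows is only a sketch of a self-contained route. Write $A=G'$. As $A$ is abelian, $G$ is metabelian, and for each $y\in G$ the map $\lambda_y\colon A\to A$, $a\mapsto[a,y]=a^{-1}a^{y}$, is an endomorphism with image the subgroup $[A,y]$, so every element of $[A,y]$ is a commutator. More generally each subgroup $[G,z]$ is normal in $G$ and $A=G'=\prod_{z\in G}[G,z]$, so by the two-generator hypothesis $A=[G,z_1][G,z_2]$ for suitable $z_1,z_2$; the goal is to locate a single $z$ with $[G,z]=A$ and, moreover, with every element of $[G,z]$ genuinely of the form $[g,z]$. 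The easy case is that in which $G$ is abelian-by-cyclic, say $G=N\langle g\rangle$ with $N$ abelian and normal in $G$ (so $A\le N$): the metabelian collection formula $[n,g^{k+1}]=[n,g^{k}]\,[n^{g^{k}},g]$ gives $[N,g]=[N,\langle g\rangle]=G'=A$, and $[N,g]$ is the image of the endomorphism $n\mapsto[n,g]$ of $N$, so every element of $A$ is a commutator $[n,g]$. In particular this settles the case $G/G'$ cyclic.

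Next I would treat the case in which $G$ is nilpotent of class at most $2$. Here $A\le Z(G)$, so $[z,\cdot]\colon G\to A$ is a homomorphism with image $[G,z]$, and a homomorphism of finite abelian $p$-groups is onto $A$ once it is onto $A/\Phi(A)$. Reducing modulo $\Phi(A)$ one is left with a linear-algebra statement: given linear maps $\bar f_z\colon V\to\F_p^{d}$ ($V=G/Z(G)$, $d\le 2$), $\bar f_z=\overline{[z,\cdot]}$, whose images span $\F_p^{d}$ and which satisfy $\bar f_z(z')=\bar b(z,z')=-\bar f_{z'}(z)$ for the alternating form $\bar b(z,z')=\overline{[z,z']}$, some $\bar f_z$ is onto. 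For $d=1$ one of $\bar f_{z_1},\bar f_{z_2}$ works. For $d=2$, if neither does then both images are lines $L_1\ne L_2$ with $L_1+L_2=\F_p^{2}$; writing $\bar f_{z_i}(v)=\ell_i(v)\,w_i$ with $L_i=\langle w_i\rangle$, the identities $\bar f_{z_i}(z_i)=0$ and $\bar f_{z_1}(z_2)=-\bar f_{z_2}(z_1)$ force $\ell_i(z_1)=\ell_i(z_2)=0$ for $i=1,2$, so for any $z$ with $\ell_1(z)\ne 0\ne\ell_2(z)$ — such $z$ exist since $V$ is not a union of two proper subspaces — the vectors $\bar f_z(z_1),\bar f_z(z_2)$ are nonzero multiples of $w_1,w_2$ and $\bar f_z$ is onto. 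Either way a good $z$ exists, whence every element of $A$ is the commutator $[g,z]$ for a suitable $g$.

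It remains to reduce the general case to the class-$2$ case, which I expect to be the main obstacle. I would induct on the nilpotency class: for class $c\ge 3$, pick $1\ne Z\le\gamma_c(G)\cap Z(G)$ (inside $\Phi(A)$ when possible), note that $(G/Z)'=A/Z$ is again abelian and two-generated, apply the inductive hypothesis to write a prescribed $a\in A$ as $aZ=[xZ,yZ]$, and then correct the lift by replacing $x,y$ with $xz',yz''$ ($z',z''\in A$), using $[xz',yz'']=[x,z''][x,y][z',y]$, which closes up once the error term is trapped inside $[A,x][A,y]$; making this correction work uniformly in the possible actions of $G$ on $A$ is the delicate part. Finally, the two-generator hypothesis is genuinely needed and enters exactly through the $d=2$ step above: for a $3$-dimensional target a linear surjection $\wedge^{2}V\to\F_p^{3}$ need not carry the decomposable $2$-vectors onto everything, and this is precisely what yields the classical finite groups in which some element of $G'\cong C_2^{3}$ fails to be a commutator.
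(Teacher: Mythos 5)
First, a point of reference: the paper does not prove this statement at all --- it is quoted as a special case of Guralnick's Theorem~A in \cite{Gur} --- so your sketch has to stand on its own as an independent proof. The two special cases you do treat are handled correctly: the abelian-by-cyclic case via $[N,g]=[N,\langle g\rangle]$ and the surjectivity of the endomorphism $n\mapsto [n,g]$ of $N$, and the class-at-most-$2$ case via reduction modulo $\Phi(A)$ and the linear-algebra argument with the alternating form (the $d=2$ step, including the appeal to the fact that a group is never the union of two proper subgroups, is sound).

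The gap is in the reduction of the general case to these two, and it is twofold. First, your induction runs on the nilpotency class, but the hypothesis only makes $G$ metabelian, not nilpotent. A group such as $G=(C_p\times C_p)\rtimes(C_q\times C_q)$, with $q\mid p-1$ and each factor of $C_q\times C_q$ acting faithfully on one factor of $C_p\times C_p$, satisfies $G'=C_p\times C_p$ but is neither nilpotent nor abelian-by-cyclic, and has $Z(G)=1$, so there is no $1\ne Z\le \gamma_c(G)\cap Z(G)$ to quotient by; an entire class of groups satisfying the hypothesis is untouched by the outline. Second, even for nilpotent $G$ you concede the decisive point: after lifting $aZ=[xZ,yZ]$ you must place an arbitrary error term from $Z$ inside $[A,x][A,y]$ for the \emph{particular} $x,y$ that the inductive hypothesis returns, and nothing in the sketch controls those elements --- $[A,x][A,y]$ can be a proper subgroup of $A$, indeed trivial when $x$ and $y$ centralize $A$, which is exactly what happens in the class-$2$ situation where you needed a completely different argument. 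This lifting problem is precisely where the substance of Guralnick's theorem lies, so what you have is a correct treatment of two special cases together with an honest acknowledgment that the hard step is missing, not a proof.
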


We will now start our analysis of finite groups in which commutators are covered by at most $m$ cyclic subgroups. We recall that a finite group $G$ has rank $r$ if $r$ is the least integer such that every subgroup of $G$ can be generated by at most $r$ elements.

\begin{lemma}
\label{clatwo}
Let $G$ be a finite nilpotent group of class $2$ that possesses $m$ 
cyclic subgroups whose union contains all commutators of $G$.
Then $G'$ has a characteristic subgroup $M$ such that $|M|$ is $m$-bounded and $G'/M$ is cyclic.
\end{lemma}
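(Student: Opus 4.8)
The plan is to reduce the statement to the case of $p$-groups and then to apply the Guralnick-type theorem (Theorem~\ref{guralnick}) together with Lemma~\ref{pri}. First I would observe that, since $G$ is nilpotent of class $2$, the derived group $G'$ is central, so $G'=\prod_p P_p$ where $P_p$ denotes its Sylow $p$-subgroup, and each $P_p$ is characteristic in $G$. By Lemma~\ref{reduction}, it suffices to produce, for each prime $p$ dividing $|G'|$, a characteristic subgroup $M_p$ of $P_p$ of $m$-bounded order with $P_p/M_p$ cyclic. Fixing a prime $p$, I would like to pass to a situation where $G'$ itself is a $p$-group; the cleanest way is to note that the commutator map is bilinear modulo $G'$ (class $2$), so commutators of $G$ project onto commutators of the quotient $G/O_{p'}(G)$, whose derived group is $P_p$, and this quotient is still covered — after projection — by the images of the $m$ cyclic subgroups, i.e.\ by $m$ cyclic subgroups. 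Hence we may assume $G'=P$ is a $p$-group.

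Next I would split according to whether $G'$ is cyclic. If it is, there is nothing to prove (take $M=1$). So assume $G'$ is a noncyclic abelian $p$-group. The key claim is then that $G'$ has $m$-bounded order, after which we simply take $M=G'$ and we are done (with the trivial quotient, which is cyclic). To prove the claim, consider the subgroup $G'_0 \le G'$ of elements that are actual commutators — but in general this is not a subgroup, so instead I would work as follows. Pick any two-generator subgroup $K \le G'$. Since $G$ has class $2$, $G'$ is central, and $K$ is contained in the centre; moreover one checks that in $G/\Phi$, for a suitable central $\Phi$, Theorem~\ref{guralnick} applies to show that every element of a two-generated section of $G'$ is a commutator. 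Concretely: for any characteristic subgroup $N$ of $G$ contained in $G'$ with $G'/N$ two-generated and abelian, Theorem~\ref{guralnick} gives that every element of $G'/N$ is a commutator of $G/N$; but commutators of $G/N$ are images of commutators of $G$, hence lie in the image of the $m$ cyclic covering subgroups. Therefore $G'/N$ is itself a noncyclic (if it is noncyclic) finite $p$-group covered by $m$ cyclic subgroups, so by Lemma~\ref{pri} its order is $m$-bounded.

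Finally I would choose $N$ optimally. Write $G' = \langle x_1 \rangle \times \cdots \times \langle x_d \rangle$ with $x_i$ of order $p^{e_1} \ge \cdots \ge p^{e_d}$. If $d \le 2$, then directly by the previous paragraph (with $N=1$) $G'$ is covered by $m$ cyclic subgroups; since it is noncyclic, Lemma~\ref{pri} bounds $|G'|$. If $d \ge 3$, take $N = \langle x_3, \dots, x_d \rangle$, which is characteristic in $G$ (it is a verbal-type / characteristic piece — more carefully, one uses that the subgroup generated by all $p^{e_3}$-th powers together with... ) so that $G'/N$ is noncyclic two-generated; the argument bounds $|G'/N| = p^{e_1+e_2}$, hence bounds $e_1$, hence $p^{e_1}$ bounds the exponent of $G'$. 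Then $G'$ is a noncyclic abelian $p$-group of $m$-bounded exponent covered — no longer literally, but via the commutator description — one reruns Lemma~\ref{pri}: since $G'$ has $m$-bounded exponent and the set of commutators covers a ``large'' part of it, $G'$ itself, being noncyclic and covered by $m$ cyclic subgroups once we know every element is a commutator (which fails for $d\ge 3$)...

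Here is the genuine obstacle: Theorem~\ref{guralnick} only guarantees that \emph{every element of $G'$ is a commutator} when $G'$ is generated by \emph{at most two} elements, so for $d \ge 3$ one cannot conclude $G'$ is literally covered by $m$ cyclic subgroups. The main work will therefore be to show that in fact $d \le 2$ is forced (or that $G'$ has $m$-bounded rank and exponent, which then bounds its order). I expect the resolution to run: apply the two-generated case to every characteristic two-generated section to bound all the ``gaps'' $e_i - e_{i+1}$ and the exponent, then bound the rank $d$ by a separate counting argument (a noncyclic $p$-group of rank $d$ and bounded exponent needs many cyclic subgroups to be covered, and its commutators still fill out enough of it), and finally combine these into an $m$-bound on $|G'|$, whence $M=G'$ works. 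This rank-bounding step, reconciling the two-generator hypothesis of Theorem~\ref{guralnick} with the possibly large rank of $G'$, is the crux of the proof.
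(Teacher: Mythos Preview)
Your reduction to $p$-groups via Lemma~\ref{reduction} matches the paper, but from that point on your route diverges and, as you yourself recognise, runs into a real obstacle. You try to force every element of $G'$ (or of a large quotient of it) to be a commutator via Theorem~\ref{guralnick}, so that Lemma~\ref{pri} can be applied to $G'$ itself. This only works when $G'$ is two-generated; for rank $d\ge 3$ you have no characteristic $N$ with $G'/N$ noncyclic and two-generated (your candidate $\langle x_3,\dots,x_d\rangle$ is not characteristic in general), and the promised ``separate counting argument'' to bound the rank is not supplied. So the proof is genuinely incomplete.

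The paper avoids Guralnick altogether here by exploiting class~$2$ more directly. The key observation you are missing is that for each fixed $y\in G$ the subgroup $[G,y]$ consists \emph{entirely} of commutators: bilinearity gives $[x,y][x',y]=[xx',y]$. Hence Lemma~\ref{pri} applies to each $[G,y]$ individually, yielding an $m$-bounded $\beta$ such that every $[G,y]$ is either cyclic or of order at most~$\beta$. Taking $M$ to be the product of all subgroups of $G'$ of order $\le\beta$ (bounded since $G'$ has rank $\le m$) and passing to $G/M$, one may assume every $[G,y]$ is cyclic. A short contradiction argument then finishes: if $G'$ were still noncyclic, pass to $G/\Phi(G')$, pick $x,y$ with $[G,x]\ne[G,y]$ both nontrivial of order $p$, and any $t\notin C_G(x)\cup C_G(y)$; then $[G,t]$ is forced to equal both $[G,x]$ and $[G,y]$. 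Note also that the lemma only asks for a bounded $M$ with $G'/M$ cyclic, not that $G'$ itself be bounded when noncyclic; you were aiming at a stronger conclusion than required.
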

\begin{proof} By Lemma \ref{reduction} it is sufficient to show that the claim is correct for each Sylow subgroup of $G$. Therefore we can assume that $G$ is a $p$-group for some prime $p$. Since $G$ is of class $2$, it follows that for each element $y\in G$ the subgroup $[G,y]$ consists entirely of commutators. By Lemma \ref{pri} there exists a bound $\beta$ such that either $[G,y]$ is cyclic or $|[G,y]|\leq\beta$. Let $M$ be the product of all 
subgroups of $G'$ whose order is at most $\beta$. Since $G'$ is an abelian group with at most $m$ generators, the rank of $G'$ is at most $m$. It follows that the order of $M$ is bounded as well. We pass to the quotient $G/M$ and 
we obtain that $[G,y]$ is cyclic for all $y\in G$.
Suppose that $G'$ is not cyclic. Passing to $G/\Phi(G')$, we assume that $G'$ is elementary abelian. We can choose $x,y\in G$ such that $[G,x]$ and $[G,y]$ are both nontrivial and $[G,x]\neq[G,y]$.
Now choose any element $t\in G$ which does not belong to $C_{G}(x)\cup C_{G}(y)$. Such an element $t$ exists because a group cannot be the union of two proper subgroups. Then $[G,t]=[G,x]$ because these are both cyclic groups of order $p$ containing $[x,t]\ne 1$. Similarly $[G,t]=[G,y]$, a contradiction.
\end{proof}
 
In what follows we write $O_{p'}(X)$ to denote the largest normal $p'$-subgroup of a finite group $X$.

\begin{lemma}\label{metab} Let $G$ be a finite metabelian group that possesses $m$ cyclic subgroups whose union contains all commutators of $G$. Then $G'$ has a characteristic subgroup $M$ such that $|M|$ is $m$-bounded and $G'/M$ is cyclic.

\end{lemma}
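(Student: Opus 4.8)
The plan is to reduce to the case of a finite $p$-group by combining Lemma~\ref{reduction} with a prior analysis of the coprime part. Concretely, let $P$ be a Sylow $p$-subgroup of the (nilpotent, since $G$ is metabelian... actually $G'$ need not be nilpotent---but $G'$ is abelian here since $G$ is metabelian, hence $G'$ is nilpotent) derived group $G'$. Since $G$ is metabelian, $G'$ is abelian, so $G'=\prod_p O_p(G')$, where $O_p(G')$ is the Sylow $p$-subgroup of $G'$; each of these is characteristic in $G$. By Lemma~\ref{reduction}, it suffices to produce, for each prime $p$ dividing $|G'|$, a characteristic subgroup $M_p$ of $O_p(G')$ of $m$-bounded order with $O_p(G')/M_p$ cyclic. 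So fix a prime $p$ and set $V=O_p(G')$.

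First I would handle the contribution of $\gamma_\infty(G)$. The commutators $[x,y]$ with $x,y$ of coprime orders are among the commutators of $G$, so they are covered by $m$ cyclic subgroups; hence Theorem~\ref{glavk} applies and gives a normal subgroup $\Delta\le\gamma_\infty(G)$ with $|\Delta|$ $m$-bounded and $\gamma_\infty(G)/\Delta$ cyclic. Passing to $G/\Delta$ (which only costs an $m$-bounded characteristic subgroup, so is harmless by Lemma~\ref{normal-characteristic} applied at the end, or simply by absorbing $\Delta$ into the final $M$), we may assume $\gamma_\infty(G)$ is cyclic; since $\gamma_\infty(G)$ is characteristic, its $p$-part $\gamma_\infty(G)\cap V$ is cyclic too. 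Now $G/\gamma_\infty(G)$ is nilpotent, so $V/(V\cap\gamma_\infty(G))$ is a Sylow $p$-subgroup of the nilpotent group $G'\gamma_\infty(G)/\gamma_\infty(G)$, and in particular $\overline G=G/\gamma_\infty(G)$ is a nilpotent metabelian group whose commutators are still covered by $m$ cyclic subgroups (images of the original ones). The upshot is that it is enough to treat a finite \emph{nilpotent} metabelian group.

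So now assume $G$ is a finite nilpotent metabelian group; by Lemma~\ref{reduction} again we may assume $G$ is a $p$-group, with $G'$ abelian. Here the strategy is to cut down the rank of $G'$ and then invoke Guralnick's theorem. As in Lemma~\ref{clatwo}, Lemma~\ref{pri} gives a bound $\beta$ such that for each $y\in G$ the subgroup $[G,y]$ is either cyclic or of order at most $\beta$ --- but in the class-$2$ case we used that $[G,y]$ consists of commutators, which fails for larger class. Instead I would argue as follows: the rank of $G'$ is at most $m$ since an abelian group covered by $m$ cyclic subgroups has rank $\le m$; let $M$ be the product of all subgroups of $G'$ of order at most $\beta$, an $m$-bounded characteristic subgroup. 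Passing to $G/M$ and then to $G/M\Phi(G')$ we reduce to $G'$ elementary abelian, and I claim $G'$ is then generated by at most two elements. Indeed, if $G'$ had rank $\ge 3$, consider the subgroups $[G,y]$ for $y\in G$: each is either trivial or, after our reductions, of rank $\le 1$ (the non-cyclic ones having been killed by $M$). Since $G'$ is generated by the $[G,y]$ and the set of $y$ with a fixed value of the rank-$\le 1$ subgroup $[G,y]$ is a union of cosets, a counting/covering argument --- a $p$-group is not a union of $p$ proper subgroups, combined with the fact that distinct $1$-dimensional $[G,y]$ would force too many subgroups --- should force rank $\le 2$, at the cost of further $m$-bounded characteristic subgroups. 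Once $G'$ has rank $\le 2$ and is an abelian $p$-group, Theorem~\ref{guralnick} says every element of $G'$ is a commutator, so $G'$ itself is covered by $m$ cyclic subgroups, whence by Lemma~\ref{pri} either $G'$ is cyclic or $|G'|$ is $m$-bounded; taking $M$ to be $G'$ in the latter case (or trivial in the former), and unwinding the $m$-bounded quotients taken along the way via Lemma~\ref{normal-characteristic}, finishes the proof.

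The main obstacle is the middle step: pushing the covering hypothesis on commutators down to a statement about the $[G,y]$ and thereby bounding the rank of $G'$ to fit the hypothesis of Theorem~\ref{guralnick}. In the class-$2$ case (Lemma~\ref{clatwo}) this was easy because $[G,y]$ literally consists of commutators; for general nilpotent metabelian $G$ one must work with the actual commutators $[x,y]$ and show that controlling how finitely many cyclic subgroups can meet all the cosets-worth of commutators forces $G'$ to have small rank. I expect this to require a careful coset-counting argument together with the observation that in a metabelian group the commutator map is "bilinear enough" modulo $\gamma_3$ and higher terms to make the reduction to rank $\le 2$ go through with only an $m$-bounded loss.
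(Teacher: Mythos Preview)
Your outline has the right endpoints (reduce to $G'$ a $p$-group, force $G'$ to be $2$-generated, apply Theorem~\ref{guralnick} and Lemma~\ref{pri}), and you correctly locate the obstacle: once $G$ has class $>2$ the set $[G,y]$ need not consist of commutators, so Lemma~\ref{pri} does not apply to it. But your ``Instead I would argue\ldots'' paragraph does not actually overcome this. You reintroduce the bound $\beta$ and claim that passing to $G/M$ (with $M$ the product of subgroups of order $\le\beta$) makes each $[G,y]$ cyclic, asserting that ``the non-cyclic ones have been killed by $M$''. That is precisely what needs proof: you never establish any $\beta$ for $[G,y]$ in the higher-class case, and the subsequent ``counting/covering argument \ldots should force rank $\le 2$'' is only a hope, not an argument. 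So the middle of the proof is a genuine gap.

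The paper fills this gap with a concrete pigeonhole on cosets of commutators. Using only that $G'$ is abelian, one has $[xa^{i},b]=[x,b]^{a^{i}}[a^{i},b]=[x,b][a^{i},b]$ for $a\in G'$, so the whole coset $[x,b]\langle[a,b]\rangle$ consists of commutators. Among $m+1$ elements of this coset two lie in one of the $m$ cyclic covers, which forces $\langle[x,b],[a,b]\rangle$ to be cyclic up to a factor of order $\le m$; after killing the $m$-bounded characteristic subgroup generated by all elements of order $\le m$ in $G'$, one gets that $\langle[x,b],[a,b]\rangle$ is genuinely cyclic for all $a\in G'$ and $b,x\in G$. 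From this it follows first that $[G',y]$ is cyclic for each $y$, then (by the same inclusion trick modulo $\Phi(\gamma_3(G))$) that $\gamma_3(G)$ itself is cyclic. Now Lemma~\ref{clatwo} applied to $G/\gamma_3(G)$, together with a bounded clean-up, makes $G'/\gamma_3(G)$ $2$-generated, and Theorem~\ref{guralnick} plus Lemma~\ref{pri} finish as you intended. Note in particular that the paper never reduces to $G$ nilpotent via $\gamma_\infty(G)$; passing to $G/O_{p'}(G')$ suffices, and the coset argument above works for metabelian $G$ with $G'$ a $p$-group without any nilpotency assumption on $G$ itself.
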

\begin{proof} By Lemma \ref{reduction} it is sufficient to show that each Sylow subgroup of $G'$ possesses a characteristic subgroup with the required properties. Let $p$ be a prime divisor of the order of $G'$. Passing to the quotient $G/O_{p'}(G')$ we can assume that $G'$ is a $p$-group. Let $a\in G'$ and $b\in G$. It is clear that each element of $\langle[a,b]\rangle$ has form $[a^i,b]$. If $x\in G$ we have $[xa^i,b]=[x,b]^{a^i}[a^i,b]=[x,b][a^i,b]$ and so every element in the coset $[x,b]\langle[a,b]\rangle$ is a commutator. Thus, the coset $[x,b]\langle[a,b]\rangle$ is covered by $m$ cyclic subgroups. It follows that for some $1\leq i\neq j\leq m+1$ one of the elements $[x,b][a^i,b]$ and $[x,b][a^j,b]$ is a power of the other. For simplicity, assume that 
$C_1=\langle [x,b][a^i,b]\rangle$ contains $[x,b][a^j,b]$.  Then also $[a^{j-i},b]\in C_1$. Therefore the subgroup 
$\langle C_1,[a,b]\rangle$ decomposes as a direct product $C_2\times D_2$, where $C_2$ is cyclic and $D_2$ is cyclic of order at most $m$.
Let $D$ be the product of all subgroups of $G'$ whose order is at most $m$. Since $G'$ is an abelian group with at most $m$ generators, the rank of $G'$ is at most $m$. It follows that the order of $D$ is at most $m^m$. We pass to the quotient $G/D$ and we see that
\begin{equation}\label{intermediate}
 \langle[a,b],[x,b]\rangle\textrm{ is cyclic for all }a\in G' \textrm{ and }b,x\in G.
\end{equation}
Note that (\ref{intermediate}) implies in particular that $[G',y]$ is cyclic for all $y\in G$.

Let us now show that $\gamma_3(G)$ is cyclic. We can pass to $G/\Phi(\gamma_3(G))$ and assume that $\gamma_3(G)$ is elementary abelian. 
Choose $y\in G$ such that $[G',y]\neq 1$ and $x$ outside $C_G(y)$. Since $[G',y]$ is of order $p$, by (\ref{intermediate}) we have $[G',y]\leq\langle[x,y]\rangle$. The same argument shows that $[G',x]\leq\langle[x,y]\rangle$. Using that $[G',x]$ is of order at most $p$, we conclude that $[G',x]\leq[G',y]$. This happens for all $x$ outside $C_G(y)$. Since the set of all such $x$ outside $C_G(y)$ generates the whole group $G$, it follows that $\gamma_3(G)=[G',y]$.

Thus, indeed $\gamma_3(G)$ is cyclic (we no longer assume that $\gamma_3(G)$ is elementary abelian). By Lemma \ref{clatwo}, $G'/\gamma_3(G)$  has a characteristic subgroup $K/\gamma_3(G)$ such that $|K/\gamma_3(G)|$ is $m$-bounded and $G'/K$ is cyclic. We see that $K$ is an abelian subgroup containing a cyclic subgroup of bounded index. The subgroup of $K$ generated by all elements of order at most the exponent of $K/\gamma_3(G)$ is characteristic and has bounded order. By factoring this subgroup we may assume that $K/\gamma_3(G)$ is cyclic and so $G'/\gamma_3(G)$ has rank at most two. Now Theorem \ref{guralnick} tells us that every element of $G'/\gamma_3(G)$ is a commutator. Therefore Lemma \ref{pri} shows that $G'/\gamma_3(G)$ either is cyclic or has $m$-bounded order.

Suppose that $G'/\gamma_3(G)$ is cyclic. Taking into account that $\gamma_3(G)$ is cyclic as well, we conclude that $G'$ is 2-generator. By Theorem \ref{guralnick} every element of $G'$ is a commutator. Therefore Lemma \ref{pri} shows that $G'$ either is cyclic or has $m$-bounded order, whence the lemma follows.

Suppose now that $G'/\gamma_3(G)$ has $m$-bounded order. We argue as above. Let $X$ be the product of all subgroups of $G'$ of order at most $|G'/\gamma_3(G)|$. As $G'$ has rank at most $3$, the subgroup $X$ has bounded order and $G'/X$ is cyclic. The proof is complete.
\end{proof}

\begin{thm}
\label{main}
Let $G$ be a finite group that possesses $m$ cyclic subgroups whose union contains all commutators of $G$. Then $G'$ has a characteristic subgroup $M$ such that the order of $M$ is $m$-bounded and $G'/M$ is cyclic.
\end{thm}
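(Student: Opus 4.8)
The plan is to carry out a sequence of reductions, at each step passing from $G$ to a quotient $G/M_0$, where $M_0$ is a characteristic subgroup of $G$ of $m$-bounded order contained in $G'$. Such a reduction is harmless: since $(G/M_0)'=G'/M_0$, a characteristic subgroup of $G/M_0$ of $m$-bounded order contained in $(G/M_0)'$ with cyclic quotient pulls back to one of the same kind in $G$. Moreover, by Lemma~\ref{normal-characteristic} (applied with $H=G'$) it is enough, at the very end, to produce a \emph{normal} subgroup $N\trianglelefteq G$ with $N\le G'$, $|N|$ $m$-bounded, and $G'/N$ cyclic. Finally, since all commutators lie in $G'$, the hypothesis says exactly that the commutators of $G$ are covered by $m$ cyclic subgroups of $G'$.

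The first reduction removes $\gamma_\infty(G)$. By Theorem~\ref{glavk}, the characteristic subgroup $\gamma_\infty(G)$ has a normal subgroup $\Delta$ of $m$-bounded order with $\gamma_\infty(G)/\Delta$ cyclic; using Lemma~\ref{normal-characteristic} we may take $\Delta$ characteristic in $G$ and factor it out, so that $\gamma_\infty(G)$ becomes cyclic. Since the automorphism group of a finite cyclic group is abelian, $G/C_G(\gamma_\infty(G))$ is abelian; hence $G'\le C_G(\gamma_\infty(G))$ and $\gamma_\infty(G)\le Z(G')$. As $G'/\gamma_\infty(G)$ is nilpotent (being a subgroup of the nilpotent group $G/\gamma_\infty(G)$), $G'$ is central-by-nilpotent, hence nilpotent. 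Writing $G'$ as the direct product of its Sylow subgroups, each characteristic in $G$, and factoring out $O_{p'}(G')$ for the relevant prime $p$, I reduce to the case in which $G'$ is a $p$-group (the general case following by reassembling the primes, as in Lemma~\ref{reduction}).

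The second step is a reduction to the metabelian situation. Applying the metabelian Lemma~\ref{metab} to $G/G''$, we get a characteristic subgroup of $G'/G''$ of $m$-bounded order with cyclic quotient; in particular $G'/G''$ is the direct product of a cyclic group with an $m$-bounded group, so $G'$ has $m$-bounded rank. The point is now that everything follows if we can show that, after one further reduction by an $m$-bounded characteristic subgroup, the subgroup $G''$ itself has $m$-bounded order: for then we simply factor $G''$ out, $G$ becomes metabelian, and Lemma~\ref{metab} applies directly. (Equivalently, it would suffice to make $\gamma_3(G)$ of $m$-bounded order, since $G''\le\gamma_3(G)$.)

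Bounding $G''$ is the heart of the matter, and the step I expect to be the main obstacle. The natural approach is to imitate the proof of Lemma~\ref{metab}: the commutators $[a,b]$ with $a,b\in G'$ generate $G''$, those with $a\in G'$ and $b\in G$ generate $\gamma_3(G)\supseteq G''$, and all of them lie in the $m$ cyclic subgroups. One would then run an induction on the nilpotency class, using Lemma~\ref{pri} on the $m$-cyclic-covered sets of commutators that fall inside the successive terms $\gamma_i(G)$ and inside $G''$, the class-two Lemma~\ref{clatwo} on the relevant sections, and Theorem~\ref{guralnick} to keep the number of generators under control. What distinguishes this from the metabelian case is that the coset trick of Lemma~\ref{metab} breaks down: in a non-metabelian $G$ the identity $[xa^i,b]=[x,b][a,b]^i$ holds only modulo $G''$, so a coset such as $[x,b]\langle[a,b]\rangle$ need no longer consist of commutators, and one is forced to set up the argument modulo $G''$ and then wrestle $G''$ down separately. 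Disposing of these $G''$-corrections — which the metabelian argument never has to confront — is the delicate point.
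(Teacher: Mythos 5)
Your reductions up to the point where $G'$ is a nilpotent $p$-group match the paper exactly, and your observation that it would suffice to bound $|G''|$ (or $|\gamma_3(G)|$) by an $m$-bounded quantity is correct in principle. But the proposal stops precisely at the step you yourself identify as ``the heart of the matter'': you offer only a hope that one can ``imitate the proof of Lemma~\ref{metab}'' while conceding that the coset trick $[xa^i,b]=[x,b][a^i,b]$ fails once $G''\ne 1$. No mechanism is given for disposing of the $G''$-corrections, so the proof is incomplete at its central point. This is a genuine gap, not a routine verification.

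The paper closes this gap by a route you do not anticipate, and it never bounds $|G''|$ directly. The key new idea is to bound the \emph{derived length} of $G$ first: since $G'$ is an $m$-generated $p$-group, the Burnside Basis Theorem lets one assume $G$ is generated by $2m$ elements; for any commutator $x$, every conjugate of $x$ is again a commutator lying in one of the $m$ cyclic subgroups, and since a finite cyclic group has at most one subgroup of each order, $\langle x\rangle$ has at most $m$ conjugates, i.e.\ $|G:N_G(\langle x\rangle)|\le m$. The intersection $T=\bigcap_x N_G(\langle x\rangle)$ then has $m$-bounded index (a $2m$-generated group has boundedly many subgroups of each index), and $T'$ centralizes every commutator, so $T$ is metabelian and the derived length of $G$ is $m$-bounded. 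This enables an induction on derived length: the inductive hypothesis applied to $G''$ lets one assume $G''$ is \emph{cyclic} (not bounded), whence $G'$ centralizes $G''$ and has class at most $2$. One then applies Lemma~\ref{metab} to $G/G''$ to get a characteristic $M\supseteq G''$ with $M/G''$ bounded and $G'/M$ cyclic, kills $M'$ by Schur's theorem, splits $M$ as cyclic times bounded, and after factoring out the bounded piece concludes that $M$ is cyclic and central in $G'$, so $G'/Z(G')$ is cyclic and $G'$ is abelian; Lemma~\ref{metab} then finishes. You would need to supply both the normalizer/derived-length argument and this induction to complete your outline.
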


\begin{proof} Let $\Delta$ have the same meaning as in Theorem \ref{glavk}.
By Lemma \ref{normal-characteristic}, we may assume that $\Delta$ is characteristic in
$G$.
We can pass to the quotient $G/\Delta$ and suppose that $G$ is soluble with $\gamma_\infty(G)$ cyclic. The group $G$ acts on $\gamma_\infty(G)$ by conjugation and as the automorphism group of a cyclic group is abelian,
it follows that $G'$ centralizes $\gamma_\infty(G)$. Therefore $G'$ is nilpotent. By Lemma \ref{reduction} it is sufficient to show that each Sylow subgroup of $G'$ possesses a characteristic subgroup with the required properties. Let $p$ be a prime divisor of the order of $G'$. Passing to the quotient $G/O_{p'}(G')$ we can assume that $G'$ is a $p$-group.

Next we remark that since $G'$ is an $m$-generator $p$-group, the Burnside Basis Theorem \cite[III.3.15]{hup} shows that $G'$ is generated by $m$ commutators. Therefore we can choose at most 2$m$ elements in $G$ such that $G'$ is generated by commutators in the chosen elements. Without loss of generality we can assume that $G$ is generated by the chosen elements.

Let $x$ be a commutator. Then any conjugate of $x$ is again a commutator and so it belongs to at least one of the $m$ cyclic subgroups
covering the commutators of $G$. Since any finite cyclic subgroup has at most one subgroup of any given order, it follows that the subgroup $\langle x\rangle$ has at most $m$ conjugates. Thus, $N_G(\langle x\rangle)$ has index at most $m$. Set
$T=\cap \, N_G(\langle x\rangle)$, where $x$ ranges over all commutators in $G$. 
Since $G$ can be generated by $2m$ elements, it has only boundedly many subgroups of any given index \cite[Theorem 7.2.9]{mhall} and so $T$ has $m$-bounded index in $G$. 
Also, $T'\le C_G(x)$ for every commutator $x$, and so $T'$ centralizes $G'$. Therefore $T$ is metabelian and the derived length of $G$ is bounded. 

We will now use induction on the derived length of $G$. By induction, we can assume that $G''$ has a characteristic subgroup $M_1$ such that $|M_1|$ is $m$-bounded and $G''/M_1$ is cyclic. Passing to the quotient over $M_1$, we assume that $G''$ is cyclic. The group $G$ induces by conjugation an abelian group of automorphisms of $G''$. Hence, $G'$ centralizes $G''$ and thus the nilpotency class of $G'$ is at most $2$. 

By Lemma \ref{metab} (applied to $G/G''$) the derived group $G'$ has a characteristic subgroup $M$ containing $G''$, and such that $M/G''$ has $m$-bounded order while $G'/M$ is cyclic. As $|M:Z(M)|\le |M:G''|$, by the Schur Theorem \cite[Theorem 4.12]{Rob} $M'$ has $m$-bounded order as well. Factoring $M'$ out we can assume that $M$ is abelian. 
We can write $M=R\times M_2$, where $R$ is a cyclic group and $M_2$ is a subgroup of $m$-bounded order. 
It follows from Lemma \ref{normal-characteristic} that $M_2$ is contained in a characteristic subgroup of $G$ of $m$-bounded order. Factoring 
it out, we can assume that $M$ is cyclic. Moreover, $G$ acts on $M$ by conjugation so $[G',M]=1$. It follows that $G'/Z(G')$ is cyclic. We conclude that $G'$ is abelian and the theorem follows from Lemma \ref{metab}.
 \end{proof}

It is easy to see that under the hypothesis of Theorem \ref{main} the order of $G'$ cannot be bounded even if we know that $G'$ is noncyclic. Indeed, let $A$ be a finite group such that $A'$ is noncyclic of order four and let $B$ be a finite group such that $B'$ has odd prime order $p$. Set $G=A\times B$. Then $G'$ is noncyclic and covered by 3 cyclic subgroups. The order of $G'$ is 4$p$ and this tends to infinity when $p$ does so.

However, our next result shows that if $G$ is a $p$-group satisfying the hypothesis of Theorem \ref{main} and having noncyclic derived group $G'$, then the order of $G'$ is $m$-bounded.

\begin{thm}
\label{finite-p}
Let $p$ be a prime and let $G$ be a finite $p$-group in which all commutators can be covered by $m$ cyclic subgroups. Then either $G'$ is cyclic or the order of $G'$ is $m$-bounded.
\end{thm}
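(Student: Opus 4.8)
The plan is to bootstrap from Theorem~\ref{main} and Lemma~\ref{pri}, after first reducing to the case in which $G'$ is abelian. By Theorem~\ref{main} there is a subgroup $M$, characteristic in $G'$ and hence normal in $G$, with $|M|$ being $m$-bounded and $G'/M$ cyclic; as $G'/M$ is abelian this forces $G''\le M$, so $|G''|$ is $m$-bounded. Passing to $\bar G=G/G''$, its derived subgroup $G'/G''$ is abelian, and the commutators of $\bar G$, being the images of the commutators of $G$, are covered by the $m$ cyclic subgroups $C_iG''/G''$. Granting the theorem when the derived subgroup is abelian, we would get that $G'/G''$ is either cyclic or of $m$-bounded order. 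In the latter case $|G'|=|G'/G''|\,|G''|$ is $m$-bounded; in the former case $G'$ is a $p$-group with $G'/(G')'=G'/G''$ cyclic, so $G'/\Phi(G')$ is cyclic and hence $G'$ itself is cyclic. Thus we may assume $G'$ is abelian.

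Next I would dispose of the case $d(G')\le 2$: by Theorem~\ref{guralnick} every element of $G'$ is then a commutator, so $G'=\bigcup_{i=1}^{m}(C_i\cap G')$ is covered by $m$ cyclic subgroups, and Lemma~\ref{pri} shows that $G'$ is cyclic or $|G'|$ is $m$-bounded. So suppose $d(G')=r\ge 3$ and write $G'=C_{p^{a_1}}\times\cdots\times C_{p^{a_r}}$ with $a_1\ge\cdots\ge a_r\ge 1$. With $M$ as above, the image of $\Omega_{a_2}(G')$ (the subgroup of elements of order dividing $p^{a_2}$) in the cyclic group $G'/M$ has order at most $p^{a_2}$, whence $|\Omega_{a_2}(G')\cap M|\ge|\Omega_{a_2}(G')|/p^{a_2}=p^{a_2+a_3+\cdots+a_r}=|B|$, where $B:=C_{p^{a_2}}\times\cdots\times C_{p^{a_r}}$; therefore $|B|\le|M|$, in particular $m$-bounded. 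Hence $r$ is $m$-bounded, and, since $r\ge 3$ gives $p^2\mid|B|$, so is $p$. As $|G'|=p^{a_1}|B|$, the whole theorem now comes down to bounding $a_1=\log_p\exp(G')$ in terms of $m$.

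This last point is the one I expect to be the main obstacle. My strategy would be to reduce it to the rank $\le 2$ case already settled: namely, to find $K\trianglelefteq G$ with $K\le G'$ such that $G'/K$ has rank $2$ while $\exp(G'/K)=\exp(G')$. Writing $T:=(G')^{p^{a_1-1}}$ for the characteristic subgroup of $p^{a_1-1}$-th powers (which has order $p$ once $a_1>a_2$, the case we may assume), the requirements on $K$ are that $K\cap T=1$ and that the image of $K$ in $G'/\Phi(G')$ has codimension $2$. Given such a $K$, the rank $\le 2$ case applied to $\bar G=G/K$ (whose commutators are covered by the images of the $C_i$) forces $G'/K$, being noncyclic, to have $m$-bounded order; then $p^{a_1}=\exp(G'/K)\le|G'/K|$ and finally $|G'|=p^{a_1}|B|$ are $m$-bounded. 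To produce $K$ one would exploit that $G$ acts on $G'$ through a finite $p$-group of automorphisms and that $|B|$ is $m$-bounded, so that only a bounded amount of structure is in play, and cut $G'$ down to rank $2$ by a bounded number of $G$-equivariant quotients each preserving $\exp(G')$. Verifying that a quotient suitable for the next step is always available---that the relevant characteristic sections of $G'$, under the induced action, never degenerate so as to force a collapse of the large cyclic part---is the delicate ingredient, and is where I expect the bulk of the technical work to lie.
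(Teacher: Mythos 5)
Your reductions are sound as far as they go: passing to $G/G''$ to assume $G'$ abelian is legitimate (and the observation $G''\le M$ makes $|G''|$ $m$-bounded); the case $d(G')\le 2$ is correctly dispatched via Theorem~\ref{guralnick} and Lemma~\ref{pri}; and the $\Omega_{a_2}$ computation showing $|B|\le |M|$ is a valid way to bound the rank and the "small part" of $G'$. But the proof is not complete: the whole difficulty has been pushed into the final step of bounding $a_1=\log_p\exp(G')$, and there you only sketch a strategy and explicitly concede that its key ingredient --- a $G$-invariant $K\le G'$ with $G'/K$ of rank $2$ and $\exp(G'/K)=\exp(G')$ --- is unverified. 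This is a genuine gap, not a routine verification: the cyclic decomposition $G'\cong C_{p^{a_1}}\times\cdots\times C_{p^{a_r}}$ need not be $G$-invariant, so there is no obvious $G$-normal complement to the large cyclic factor, and insisting on preserving the exponent is the wrong invariant to chase.

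The paper sidesteps this entirely with a small change of viewpoint: one need not preserve the exponent, only \emph{noncyclicity}, because the kernel being factored out has bounded order. Concretely, choose $N\trianglelefteq G$ of minimal order subject to $(G/N)'$ being cyclic; by Theorem~\ref{main} such an $N$ satisfies $|N|\le |M|$, hence is $m$-bounded, and $N\ne 1$ since $G'$ is noncyclic. Pick $L\trianglelefteq G$ with $L\le N$ and $|N:L|=p$. By minimality of $N$, $(G/L)'$ is noncyclic; on the other hand $N/L$ is a normal subgroup of order $p$ of the $p$-group $G/L$, hence central, and $(G/L)'/(N/L)\cong G'/N$ is cyclic, so $(G/L)'$ is abelian and $2$-generated. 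Theorem~\ref{guralnick} and Lemma~\ref{pri} then bound $|(G/L)'|$, and $|G'|\le |(G/L)'|\,|L|\le |(G/L)'|\,|N|$ is $m$-bounded. If you wish to salvage your scheme, this $L$ is precisely the normal subgroup you were trying to construct: the quotient $G'/L$ has rank at most $2$ and is noncyclic, which is all that is actually needed.
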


\begin{proof}
Let us assume that $G'$ is not cyclic. By Theorem \ref{main} the derived group $G'$ contains a characteristic subgroup $M$ of $m$-bounded order such that $G'/M$ is cyclic.

We choose in $G$ a normal subgroup $N$ of minimum possible order subject to the condition that $(G/N)'$ is cyclic. Then $1\ne N\subseteq G'$, and the order of $N$ is $m$-bounded. Since $G$ is a finite $p$-group, there exists a normal subgroup $L$ in $G$ which is contained in $N$ and has index $p$ in $N$. By the assumption on $N$, the derived group of $G/L$ is not cyclic.
Thus by factoring out $L$ we may assume that $N$ has order $p$ and therefore
$G'$ is 2-generator. Now Theorem \ref{guralnick} tells us that every element of $G'$ is a commutator. Hence, it follows from Lemma \ref{pri} that $G'$ has $m$-bounded order. The proof is complete.
\end{proof}

\section{Proofs of the main results}
\label{profinite}

We are now ready to complete the proofs of the theorems stated in the introduction.

\begin{proof}[Proof of Theorem A]
By Lemma \ref{normal-characteristic}, it suffices to find a normal subgroup of $G$ inside $G'$ with the desired properties.
Let $\mathcal N$ be the family of all open normal subgroups of $G$, and observe that
$G\cong \varprojlim_{N\in\mathcal{N}} \, G/N$.
Consider an arbitrary $N\in\mathcal{N}$, and put $Q=G/N$.
Let us write $\mathcal{M}(N)$ for the set of all subgroups $R$ of $Q'$ which are normal in
$Q$, of order at most $f(m)$, and satisfy the condition that $Q'/R$ is cyclic.
By Theorem \ref{main}, $\mathcal{M}(N)$ is not empty.

Given $L,N\in\mathcal{N}$ with $L\le N$, the natural map $\pi_{LN}$ from $G/L$ to $G/N$ induces a map $\varphi_{LN}$ from $\mathcal{M}(L)$ to $\mathcal{M}(N)$.
This way we get an inverse system $\{\mathcal{M}(N),\varphi_{LN},\mathcal{N}\}$ of finite sets.
By \cite[Proposition 1.1.4]{ribes-zal}, the corresponding inverse limit is not empty.
If $(M_N/N)_{N\in\mathcal{N}}$ is an element of that inverse limit, then
$\pi_{LN}(M_L/L)=M_N/N$ for all $L,N\in\mathcal{N}$ such that $L\le N$.
Hence we can form the inverse limit $\varprojlim_{N\in\mathcal{N}} \, M_N/N$, which corresponds to a normal closed subgroup $M$ of $G$.
Since $|M_N/N|\le f(m)$ for every $N\in\mathcal{N}$, we also have $|M|\le f(m)$.
Finally, observe that
\[
(G/M)' = G'/M \cong \varprojlim_{N\in\mathcal{N}} \,  \frac{(G/N)'}{M_N/N}
\]
is an inverse limit of cyclic subgroups, and so procyclic.
\end{proof}

\begin{proof}[Proof of Theorem B] 
Let $G$ be a pro-$p$ group such that all commutators in $G$ are covered by $m$ procyclic subgroups. Choose an open normal subgroup $N$ of $G$. The quotient $Q=G/N$ is a finite $p$-group satisfying the hypotheses of Theorem \ref{finite-p}. Therefore either $Q'$ is cyclic or the order of $Q'$ is at most some $m$-bounded number $k$. Suppose now that the derived group $G'$ is not of order at most $k$. Then there exists an open normal subgroup $N$ in $G$ such that the order of the derived group $(G/N)'$ is larger than $k$ and hence $(G/N)'$ is cyclic. Then $(G/H)'$ is cyclic for any open normal subgroup $H$ contained in $N$. It follows that $G'$ is procyclic, as required.
\end{proof}

\begin{proof}[Proof of Theorem C]  By the main result of \cite{FerShu} mentioned in the introduction, we know that $G'$ is either cyclic or finite. 
So it is sufficient to concentrate on the case where $G'$ is finite. There exists a finitely generated subgroup of $G$ whose derived subgroup coincides with $G'$, and consequently
we may assume that $G$ is finitely generated. As $G'$ is finite, the centralizer $C_G(G')$ has finite index in $G$ and so it is also finitely generated. Moreover, $C_G(G')$ is nilpotent of class at most $2$ and thus it is residually finite (see \cite{H}). We conclude that $G$ is residually finite as well. Since $G'$ is finite, there exists a normal subgroup $N$ of $G$ of finite index in $G$ such that $G'\cap N=1$. As $G'$ is isomorphic to $G'N/N=(G/N)'$ and in 
the finite group $G/N$ the result holds by Theorem \ref{main}, the conclusion follows.
\end{proof}

\end{document}